\pdfoutput=1
\documentclass[11pt]{extarticle}

\usepackage[left=1in,right=1in,top=1in,bottom=1.3in]{geometry}
\usepackage{amsmath, amssymb, amsthm, latexsym, indentfirst, amsfonts, xcolor, mathtools, manfnt, subcaption} 
\usepackage[shortlabels]{enumitem}
\usepackage[utf8]{inputenc} 
\usepackage[english]{babel}

\usepackage[breaklinks,backref=page]{hyperref}
\usepackage{cleveref}
\hypersetup{
	colorlinks = true, 
	urlcolor = cyan, 
	linkcolor = teal, 
	citecolor = cyan 
}

\usepackage{tikz} 

\renewcommand*{\backref}[1]{}
\renewcommand*{\backrefalt}[4]{
	\ifcase #1 \textcolor{red}{Not cited.}%
	\or $\uparrow$#2%
	\else $\uparrow$#2%
	\fi%
}
\let\OLDthebibliography\thebibliography
\renewcommand\thebibliography[1]{
	\OLDthebibliography{#1}
	\setlength{\parskip}{0pt}
	\setlength{\itemsep}{0pt plus 0.3ex}
}

\newtheorem{Theorem}{Theorem}

\newtheorem{Lemma}{Lemma}
\newtheorem{Problem}{Problem}

\newcommand{\R}{\mathbb R}

\newcommand{\N}{\mathbb N}

\newcommand{\CC}{\mathcal{C}}

\newcommand{\x}{\mathbf x}

\newcommand{\z}{\mathbf z}

\newcommand{\ind}[1]{\smash{\xrightarrow[\smash{\raisebox{0.7ex}{\tiny $ind$}}]{#1}}}

\newcommand{\combine}[1]{{\,\oplus_{#1}\, }}

\title{Any 2-coloring of the plane contains monochromatic unit rhombuses}
\author{Kenneth Moore\thanks{HUN-REN Alfr\'ed R\'enyi Institute of Mathematics, Budapest, Hungary, \href{mailto:kjmoore@renyi.hu}{\tt kjmoore@renyi.hu}. Supported by ERC Advanced Grants ``GeoScape", no. 882971 and ``ERMiD", no. 101054936.}
\and Arsenii Sagdeev\thanks{Karlsruhe Institute of Technology, Karlsruhe, Germany, \href{mailto:sagdeevarsenii@gmail.com}{\tt sagdeevarsenii@gmail.com}.}}
\date{}

\begin{document}

\maketitle

\begin{abstract}
    In this note, we prove that any 2-coloring of the plane contains
    4 points of the same color forming a rhombus with unit sides and non-unit diagonals,
    answering a question of Axenovich, Liu, and the second author.
\end{abstract}

\section{Introduction}
An important branch of geometric Ramsey problems was founded by Erd\H{o}s, Graham, Montgomery, Rothschild, Spencer, and Strauss in three seminal papers \cite{ErdEtAl, ErdEtAla, ErdEtAlb}. Problems in this area concern partitions of Euclidean space into color classes, and which finite configurations of points are always found in one color class. Often, the configurations studied are congruent copies of a single set. The configurations studied by Axenovich, Liu, and the second author in \cite{AxeLiuSag} instead are unit-distance graphs, which may form families of point-sets with more than a single congruence class. Unit-distance graphs are well studied in several contexts, one of which is the problem of determining their maximum density; see \cite{EngHamSuVarZsa} for a relevant example.

For a graph $H = (V, E)$, we say that a set $A \subseteq \R^n$ with $|A| = |V|$ is a \emph{unit-copy} of $H$ if there is a bijection $f : V \to A$ such that $uv \in E$ implies $\|f (u) - f (v)\| = 1$. A unit-copy of $H$ is \emph{induced} if for any $uv\not \in E$, $\|f(u)-f(v)\|\neq 1$. We call graphs that have an induced unit-copy in $\R^n$ the \textit{unit-distance graphs in $\R^n$}. Moreover, a given set of points $A\subset\R^n$ is an induced unit-copy of a unit-distance graph, denoted $U(A)$, which is unique up to isomorphism.

When $A\subseteq \R^n$ and $\mathcal{F}$ is a family of subsets of $\R^n$, we use the arrow notation $A \xrightarrow[]{r} \mathcal{F}$ in place of the statement that any $r$-coloring of the points of $A$ contains a monochromatic congruent copy of an element of $\mathcal{F}$. Similarly, if $G$ is a graph and $\mathcal{H}$ is a family of graphs, $G \xrightarrow[]{r} \mathcal{H}$ (resp. $G \ind{r} \mathcal{H}$) denotes the statement that any $r$-coloring of the vertices of $G$ contains a monochromatic subgraph (resp. induced subgraph) that is a copy of some element of $\mathcal{H}$.

Of particular interest in \cite{AxeLiuSag} is the 4-cycle graph $C_4$ and its induced unit-copies. We denote by $\CC_k$ the family of all point sets $A\subset\R^2$ where $U(A)$ is the $k$-cycle $C_k$. Therefore, the sets in $\CC_4$ are exactly the 4-point rhombuses of side length 1 where neither diagonal has length 1. The following theorem answers Question~1 in \cite{AxeLiuSag}. 

\begin{Theorem}
\label{thm:main_c4_result}
$\R^2 \xrightarrow[]{2} \CC_4$.
\end{Theorem}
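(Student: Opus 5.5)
We will reduce \Cref{thm:main_c4_result} to a finite, purely combinatorial statement about unit-distance graphs and then verify that statement. The plan is to find a finite point set $A\subset\R^2$ with $A\xrightarrow[]{2}\CC_4$. Equivalently, we want a unit-distance graph $G=U(A)$ such that every $2$-coloring of $V(G)$ contains a monochromatic \emph{induced} $4$-cycle, that is, $G\ind{2}\{C_4\}$. Any induced $C_4$ in $U(A)$ is realized by four points of $A$ forming a unit rhombus whose diagonals are non-unit, since non-adjacent pairs in $U(A)$ are at non-unit distance. So such a rhombus lies in $\CC_4$.

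The key tool is a product-type construction, presumably the operation $\combine{}$ the paper sets up: Minkowski sums of unit-distance configurations in general rotational position. If $A$ and $B$ are point sets and $\theta$ is a generic rotation, then in $A+\theta B$ the only unit distances come from pairs $(a,b),(a',b)$ with $\|a-a'\|=1$ or pairs $(a,b),(a,b')$ with $\|b-b'\|=1$. Genericity rules out coincidental unit distances. Hence $U(A+\theta B)\cong U(A)\,\square\, U(B)$, the Cartesian product. In a Cartesian product, an edge $e$ of one factor together with an edge $f$ of the other spans an induced $C_4$, namely $e\,\square\, f$, whose diagonals are non-edges.

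So it suffices to find unit-distance graphs $G_1,G_2$ such that every $2$-coloring of $G_1\square G_2$ has a monochromatic $e\square f$. I would iterate: take a short odd cycle, or better a rigid odd structure such as the Moser spindle, and form $G=H^{\square k}$ for a suitable $k$, realized by iterated generic rotated sums. Then argue Ramsey-style.

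\begin{itemize}
\item Consider a copy of $H\square H$ and fix a $2$-coloring.
\item Since $H$ is not bipartite, each copy of $H$ (each row) contains a monochromatic edge.
\item Record, for each row, which edge is monochromatic and in which color.
\item Pigeonhole over many rows, indexed by the vertices of the second factor. If two rows indexed by adjacent vertices $u\sim u'$ have the same monochromatic edge $e$ in the same color, then $e\times\{u,u'\}$ is a monochromatic induced $C_4$.
\end{itemize}

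The second factor therefore has to force adjacent rows to agree. The label set has size $2|E(H)|$, so we need a second factor whose chromatic number exceeds $2|E(H)|$. Plane unit-distance graphs have chromatic number at most $7$, which is far too small for the second factor to be a plane unit-distance graph. Instead I would use the second factor $H^{\square m}$ with a compactness and product-Ramsey argument, in the spirit of a Hales--Jewett type lemma, to force agreement along an edge. Alternatively, I would simply choose small graphs and verify by computer (SAT) that every $2$-coloring of the product contains a monochromatic $e\square f$.

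The main obstacle I expect is this last step: obtaining a provable coloring-forcing statement. Concretely, this means showing that some iterated product of odd cycles or spindles forces a monochromatic $e\square f$ in every $2$-coloring, while keeping all factors realizable in the plane.

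If a clean Hales--Jewett argument is unavailable, I would take a specific small candidate, such as the $C_5\square C_5$ analogue or a triangle-based product like $K_3\square K_3\square K_3$ (triangles are realizable in the plane). I would then check it exhaustively, and only afterward confirm generic realizability via the rotation argument above.
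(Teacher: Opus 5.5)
There is a genuine gap. The proposal never supplies the coloring-forcing statement, and the routes you suggest for it cannot work. Your realization step is fine: generic rotated Minkowski sums give $U(A+\theta B)\cong U(A)\square U(B)$, which plays the role of Lemma~\ref{lem:product}. Note, though, that the paper's $\oplus_m$ is not a Minkowski sum; it glues copies of a gadget along congruent $m$-tuples. The problem is that monochromatic squares $e\square f$ cannot be forced from the factors you have in mind.

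Suppose $G_1,G_2$ are $4$-colorable, with proper colorings $\kappa_1,\kappa_2$ into $\mathbb{Z}_4$. Color $(u,v)$ red if $\kappa_1(u)+\kappa_2(v)\in\{0,1\}\pmod 4$ and blue otherwise. A square $e\square f$ carries the residues $t,\,t+\alpha,\,t+\beta,\,t+\alpha+\beta$ with $\alpha,\beta\neq 0$.
\begin{itemize}
\item If one of $\alpha,\beta,\alpha+\beta$ equals $2$, the set contains $t$ and $t+2$.
\item Otherwise $\alpha+\beta=0$, and the set contains $t\pm\alpha$, which also differ by $2$.
\end{itemize}
Residues differing by $2$ get different colors, so no square is monochromatic. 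The same coloring via $\sum_i\kappa(x_i)$ works for every power $H^{\square n}$ with $\chi(H)\le 4$.

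$K_3$, odd cycles and the Moser spindle are $4$-colorable and contain no induced $C_4$. So this coloring leaves $K_3^{\square 3}$, $C_5\square C_5$ and all spindle powers with no monochromatic induced $C_4$ at all, and your exhaustive or SAT checks would simply return such colorings. It also shows that no Hales--Jewett or compactness argument built on ``one monochromatic edge per row'' can exist for such $H$. Rows are colored independently, and $\chi(H^{\square m})=\chi(H)$ stays small.

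The missing idea is that the base configuration must force something stronger than a monochromatic edge, and the monochromatic rhombus must be allowed to lie inside a single factor. The paper first builds a finite $A'$ with $A'\xrightarrow{2}\CC_3\cup\CC_4$. This is the hard, computer-verified part, obtained by chaining three gadgets:
\begin{itemize}
\item $B_{154}$: a monochromatic $T$ forces a monochromatic element of $\CC_3\cup\CC_4$;
\item $B_{46}$: a bichromatic pair at distance $4/\sqrt3$ forces a monochromatic $T$ or a monochromatic element of $\CC_3\cup\CC_4$;
\item $B_7$: if every such pair is equicolored, a rhombus is monochromatic.
\end{itemize}

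Only then does the product enter, in Lemma~\ref{lem:equilateral_triangle}. In $U(A')^{\square m}$, each slice without a monochromatic induced $C_4$ contains a monochromatic triangle. By pigeonhole, two such triangles in different directions share a vertex. The four auxiliary vertices of the resulting $3\times 3$ grid either include a red one, giving a red square, or are all blue, giving a blue square.

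The triangle is essential. With only a monochromatic edge per slice you get a $2\times2$ grid whose fourth corner may take the other color, which is exactly what the $\mathbb{Z}_4$ coloring exploits. Triangles alone cannot be forced in the plane: horizontal strips of width $\sqrt3/2$ avoid monochromatic unit equilateral triangles. That is why the mixed statement $A'\xrightarrow{2}\CC_3\cup\CC_4$, rather than a pure product argument, is the heart of the proof.
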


For more of an introduction to Euclidean Ramsey theory and several known results, see~\cite{AxeLiuSag} and the paper~\cite{CurMooYip} by Currier, the first author, and Yip. Our solution to Theorem~\ref{thm:main_c4_result} is a streamlined argument combining the ideas from both of these papers, although the additional complexity of our construction will necessitate computer assistance. 

\section{Proof of Theorem~\ref{thm:main_c4_result}}

If there is a 2-coloring of $\R^2$ with no monochromatic induced unit-copy of $C_4$, we argue that further patterns must be avoidable. The proof is carried out by providing a short sequence of such constraints over two Lemmas. This sequence is summarized informally in the following steps, and the proof is then given at the end of this section.
\vspace{3mm}

\noindent
If a 2-coloring of $\R^2$ has no monochromatic element of $\CC_4$, then
\begin{enumerate}[]
    \item we can also assume that no element of $\mathcal C_3$ is monochromatic (Lemma~\ref{lem:equilateral_triangle}), which implies
    \item no triple with distances $1,1,\frac16(3 + \sqrt{33})$ is monochromatic (Lemma~\ref{lem:computational}(a)), which implies
    \item every two points at distance $\frac{4}{\sqrt{3}}$ are of the same color (Lemma~\ref{lem:computational}(b)).
\end{enumerate}

The final statement is contradictory. One way to see this easily is to consider Figure~\ref{fig:contradiction}, which shows a set of seven points $B_7$ made up of a unit-rhombus and three other points such that all six dotted segments are of length $\frac{4}{\sqrt{3}}$. If the points opposite each other on each dotted segment are always the same color, the rhombus is forced to be all one color.

\begin{figure}[h]
    \centering
    \includegraphics[scale=0.6]{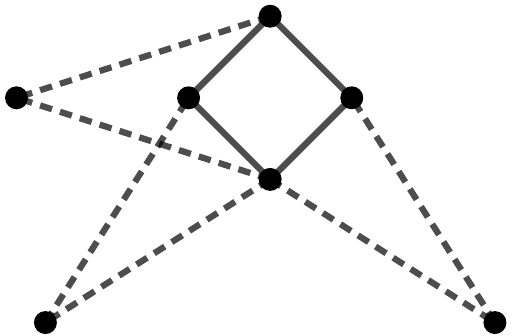}
    \caption{The set $B_7$, seven points with no valid coloring}
    \label{fig:contradiction}
\end{figure}

We prove the first step with Lemma~\ref{lem:equilateral_triangle} below, and because of the geometric connection from Lemma~\ref{lem:product}, this step can be phrased purely in terms of graphs. For graphs $G$ and $H$, we define the \textit{Cartesian product} $G\square H$ as the graph with vertex set $V(G) \times V(H)$, where $(v_1, v_2)$, $(u_1, u_2)$ form an
edge in $G\square H$ if and only if either $v_1u_1\in E(G)$ and $v_2 =u_2$, or $v_1=u_1$ and $v_2u_2\in E(H)$. We define the \textit{Cartesian power} recursively using the notation $G^{\square 1}=G$ and $G^{\square m} = G\square G^{\square m-1}$ for $m\geq 2$.

\begin{Lemma}[{Horvat--Pisanski~\cite[Theorem~3.4]{horvat2010products}}]
    \label{lem:product}
    If $G$ is a unit-distance graph in the plane, then $G^{\square m}$ is a unit-distance graph in the plane for all $m \in \N$.
\end{Lemma}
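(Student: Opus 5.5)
It suffices to prove the case $m=2$ in a slightly stronger form: if $G$ and $H$ are both unit-distance graphs in the plane, then so is $G\square H$. The general statement then follows by induction on $m$, using $G^{\square m}=G\square G^{\square m-1}$. So fix an induced unit-copy $f:V(G)\to\R^2$ and an induced unit-copy $g:V(H)\to\R^2$. The natural candidate embedding of the product is the Minkowski-sum map
\[
F(v,w)=f(v)+R_\theta\, g(w),
\]
where $R_\theta$ is the rotation by an angle $\theta$ that we will choose generically.

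Edges are automatic. If $v_1u_1\in E(G)$ and $w$ is fixed, then $\|F(v_1,w)-F(u_1,w)\|=\|f(v_1)-f(u_1)\|=1$. The same holds for an edge of $H$ with the $G$-coordinate fixed, because $R_\theta$ is an isometry. What remains is to choose $\theta$ so that three conditions hold.
\begin{enumerate}[(i)]
\item $F$ is injective.
\item If $v\ne u$ and $w\ne w'$, then $\|f(v)-f(u)+R_\theta(g(w)-g(w'))\|\ne1$.
\item Pairs differing in only one coordinate are non-edges exactly when they should be.
\end{enumerate}
Condition (iii) is inherited from $f$ and $g$ being induced copies, since such differences are $f(v)-f(u)$ or $R_\theta(g(w)-g(w'))$.

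For (i) and (ii), set $a=f(v)-f(u)$ and $b=g(w)-g(w')$. Both are nonzero when $v\ne u$ and $w\ne w'$. Consider the quantity
\[
\|a+R_\theta b\|^2=\|a\|^2+\|b\|^2+2\|a\|\|b\|\cos(\theta+\varphi),
\]
where $\varphi$ depends only on $a$ and $b$. This is a non-constant trigonometric function of $\theta$ because $\|a\|\|b\|\neq0$. Hence it takes the value $1$ for at most two values of $\theta$ in $[0,2\pi)$, and the value $0$ (a collision) for at most two values as well. For pairs differing in one coordinate only, injectivity already follows from the injectivity of $f$ and $g$. The graphs are finite, so there are finitely many pairs $(a,b)$, and therefore only finitely many bad angles in total. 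Choosing $\theta$ outside this finite set makes $F$ an injective, induced unit-copy of $G\square H$.

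I expect the only real subtlety to be the non-constancy argument, namely verifying that $a$ and $b$ are nonzero so that no bad equation holds identically in $\theta$; this is immediate from injectivity. If one wants infinite graphs as well, countability still leaves a good $\theta$, since a countable union of finite sets cannot cover $[0,2\pi)$. Taking $H=G^{\square m-1}$ completes the induction.
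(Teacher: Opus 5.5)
Your proof is correct. The map $F(v,w)=f(v)+R_\theta g(w)$ handles edges and pairs differing in one coordinate automatically, and each pair differing in both coordinates rules out only finitely many angles, so a generic $\theta$ gives an injective induced unit-copy of $G\square H$; induction on $m$ then finishes. The paper gives no proof of its own and cites Horvat--Pisanski instead; to my recollection their argument is essentially this same generic-rotation Minkowski-sum construction.
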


\begin{Lemma}
    \label{lem:equilateral_triangle}
    If $G$ is a graph where $G \xrightarrow{2} \{C_3, C_4\}$, then $G^{\square m} \ind{2} C_4$ for a sufficiently large $m \in \N$.
\end{Lemma}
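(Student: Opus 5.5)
The plan is to work fiber by fiber in the product and use the extra coordinates to turn a monochromatic triangle into a monochromatic square. Three preliminary reductions come first.

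\textbf{Finiteness.} By compactness (de Bruijn--Erd\H{o}s), I may assume $G$ is finite, with vertex set $V$ of size $n$.

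\textbf{Strengthening the hypothesis.} If a $2$-coloring of $G$ has a monochromatic $C_4$ subgraph with a chord, then that chord and two sides form a monochromatic triangle. Hence $G \xrightarrow{2} \{C_3, C_4\}$ implies that every $2$-coloring of $G$ contains either a monochromatic triangle or a monochromatic \emph{induced} $C_4$.

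\textbf{Induced subgraphs of $G^{\square m}$.} For a base point $p\in V^{m-1}$ and a coordinate $i$, the \emph{fiber} of $p$ in direction $i$ consists of all vertices that agree with $p$ off coordinate $i$. Each fiber is an induced copy of $G$, so an induced $C_4$ of $G$ inside a fiber is an induced $C_4$ of $G^{\square m}$. Moreover, if $ab\in E(G)$ is used in coordinate $i$ and $xy\in E(G)$ is used in a coordinate $j\neq i$, then the four points obtained from a base point by placing $a$ or $b$ in coordinate $i$ and $x$ or $y$ in coordinate $j$ span an induced $C_4$. Its two diagonals join points differing in two coordinates, so they are non-edges.

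Now fix a $2$-coloring of $G^{\square m}$ with no monochromatic induced $C_4$. By the strengthened hypothesis, every fiber contains a monochromatic triangle. The goal is to find two fibers in the same direction $i$ whose base points are adjacent, say $p$ and $q$ differing by an edge $xy$ in some coordinate $j\ne i$, such that some triangle $abc$ is red in both fibers. Then $(a,p),(b,p),(b,q),(a,q)$ is a red induced $C_4$, a contradiction.

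The first attempt is to give each base point the ``fiber color'' equal to the coloring of its fiber, one of $N=2^{n}$ colors, and look for a monochromatic edge. This fails: $\chi(G^{\square k})=\chi(G)$, so an $N$-coloring of the base can be proper. Indeed, parity-type colorings of a hypercube $Q_k\subseteq G^{\square k}$ show that squares alone do not force anything. So the argument must use the triangles.

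The key local constraint is the following. If $abc$ is red in the fiber at $p$, then at every neighbor $q$ of $p$ in another coordinate, at most one of $a,b,c$ can be red in the fiber at $q$, since two red vertices there would close a red square with $p$. I will then work inside the Hamming-type subgraph $\{a,b,c\}^{\square m}=K_3^{\square m}\subseteq G^{\square m}$, in which every line is a triangle and every pair of lines in distinct coordinates through a common point spans induced squares. In this subgraph I will try to prove, by induction on the dimension, that a $2$-coloring with no monochromatic square forces the color of a point to be essentially determined by the number of coordinates equal to a fixed ``red'' value. Concretely, the red points are exactly those that differ from a red line in a controlled pattern. Iterating the local constraint along $m$ coordinates should then push the number of red points on some line outside $\{1,2\}$.

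The resulting monochromatic line is a monochromatic triangle, and I need to combine it with another fiber in the same direction to produce the square. This requires running the argument simultaneously over the finitely many triangles of $G$ (at most $\binom n3$) and the two colors. I will handle that by pigeonhole over the $\binom n3\cdot 2$ possible ``types'' of monochromatic triangle. I would choose $m$ larger than the number of types plus one, so that two coordinates carry the same type at adjacent base points.

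The main obstacle I expect is this last combinatorial step: making the propagation of the local constraint rigorous, and choosing $m$ so that pigeonhole genuinely yields adjacent base points with the \emph{same} red triangle. I am not confident the plan as stated closes. If the Hamming-graph induction does not go through, I would fall back on a Hales--Jewett-style density or coloring argument on $K_3^{\square m}$, treating lines as the forbidden monochromatic configuration.
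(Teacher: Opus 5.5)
\textbf{There is a genuine gap: the proposal never forces a monochromatic induced $C_4$.} Your preliminaries are all correct. Fibers are induced copies of $G$, a chorded $4$-cycle contains a triangle, product rectangles $\{a,b\}\times\{x,y\}$ in two distinct coordinates are induced $C_4$'s, and your local constraint holds. But the induction on $K_3^{\square m}$, the pigeonhole over triangle ``types,'' and the Hales--Jewett fallback are never specified. For the fallback, note that combinatorial lines with more than one active coordinate are not even triangles of $K_3^{\square m}$.

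A single monochromatic triangle plus the local constraint is too weak by itself. Take a grid $\{a,b,c\}\times\{x,y,z\}$ with $xyz$ a triangle in a second coordinate. Color row $x$ red, color $(a,y)$ and $(b,z)$ red, and color the remaining four points blue. This has no monochromatic rectangle. Fixing one triangle $abc$ in every coordinate is also the wrong frame, since different fibers generally carry different monochromatic triangles. And your stated target, the same red triangle in two adjacent parallel fibers, is narrower than what is needed.

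\textbf{The missing idea} is to find two monochromatic triangles in \emph{different} directions through a common vertex. This follows from counting alone.
\begin{itemize}
\item Let $n=|V(G)|$ and take $3m>n$ (the paper takes $m=n$). There are $mn^{m-1}$ fibers, each containing a monochromatic triangle.
\item Choosing one triangle per fiber gives $3mn^{m-1}>n^m$ vertex incidences, so two chosen triangles meet.
\item Parallel fibers are disjoint, so these two triangles lie in directions $i\ne j$. They meet in exactly one vertex and therefore have the same color, say red.
\item Writing only coordinates $i,j$: the points $(y_s,z_1)$ and $(y_1,z_t)$ are red for all $s,t\in\{1,2,3\}$, where $y_1y_2y_3$ and $z_1z_2z_3$ are triangles of $G$.
\item If some $(y_s,z_t)$ with $s,t\in\{2,3\}$ is red, then $(y_1,z_1)(y_s,z_1)(y_s,z_t)(y_1,z_t)$ is a red induced $C_4$.
\item Otherwise these four points are all blue and form a blue induced $C_4$.
\end{itemize}
This ``cross'' argument is your local constraint applied in both directions at once. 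It makes the whole propagation scheme unnecessary.
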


\begin{proof}
    Similar ideas were used in~\cite[Section~2]{AxeLiuSag} to prove related properties of Cartesian powers.

    Take $m=|V(G)|$. Fix any red-blue coloring of the vertices of $G^{\square m}$.
    Consider \textit{slices} of $G^{\square m}$, i.e., subgraphs induced by the Cartesian products of $V(G)$ and $m-1$ singletons in any order; the \textit{direction} of a slice is the index of the position of $V(G)$ in the corresponding Cartesian product.
    
    Note that there are exactly $k \coloneqq m|V(G)|^{m-1}$ slices in $G^{\square m}$, each of which is a copy of $G$. Hence, each slice contains either a monochromatic triangle or a monochromatic induced 4-cycle (here we use the fact that each non-induced 4-cycle contains a triangle). If the latter holds at least for one slice, we are done since this gives the desired monochromatic induced 4-cycle in $G^{\square m}$. Hence, we can assume without loss of generality that each of the $k$ slices contains a monochromatic triangle.
    
    Pick a collection of $k$ such triangles, one per slice (for those slices that contain many monochromatic triangles, we pick one of them arbitrarily). By the pigeonhole principle, some 2 of these triangles intersect since $3k > |V(G)|^m$ by the choice of $m$. Note that these 2 triangles are of different directions and thus they share exactly 1 vertex. Label their vertices by $\x_{1,1},\x_{2,1},\x_{3,1}$ and $\x_{1,1},\x_{1,2},\z_{1,3}$, respectively. Note that all 5 of these points are of the same color, say red.
    
    Since the other cases are analogous, we can assume without loss of generality that the directions of the slices containing the triangles $\x_{1,1}\x_{2,1}\x_{3,1}$ and $\x_{1,1}\x_{1,2}\z_{1,3}$ are $1$ and $2$, respectively. This implies that for some triangles $y_1y_2y_3$, $z_1z_2z_3$ in $G$ and some $*\in V(G)^{m-2}$, we have $\x_{i,j}=(y_i,z_j,*)$ for all $i=1,\, j \in \{1,2,3\}$ and $i \in \{1,2,3\},\, j=1$ (note that the 3-elements sets $\{y_1,y_2,y_3\}$ and $\{z_1,z_2,z_3\}$ are not necessarily disjoint, and may even coincide).
    
    Consider 4 auxiliary vertices $\x_{i,j}$ defined analogously for $i,j\in \{2,3\}$, namely $\x_{i,j}=(y_i,z_j,*)$. If at least 1 of these 4 auxiliary vertices is red, say $\x_{i,j}$, then $\x_{1,1}\x_{i,1}\x_{i,j}\x_{1,j}$ is the desired red induced 4-cycle in $G^{\square m}$. Otherwise, if all 4 of them are blue, then $\x_{2,2}\x_{3,2}\x_{3,3}\x_{2,3}$ is the desired blue induced 4-cycle in $G^{\square m}$.
\end{proof}

Next, for two finite sets $A,B\subset \R^n$, we use $A \combine{m} B$ to denote a new set which is obtained as follows. For every ordered $m$-tuple of points in $A$ which is congruent with the first $m$ vertices in $B$, take a congruent copy of $B$ such that these two $m$-tuples coincide. The union of all such transformed copies of $B$ along with $A$ itself is $A \combine{m} B$. 

\begin{figure}[ht]
\centering
\includegraphics[scale=0.58]{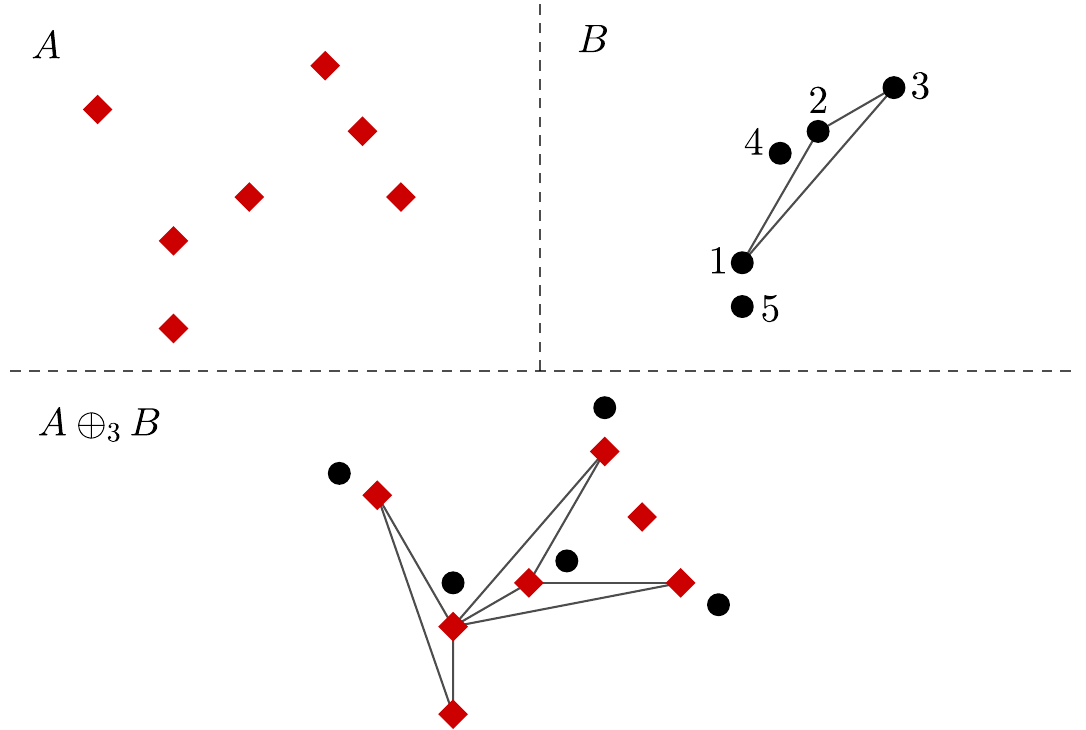}
\caption{Two sets combined with the $A\oplus_3B$ notation}
\label{fig:combine}
\end{figure}

\begin{Lemma}
    \label{lem:computational}
    Let $T$ be the triangle with sides $1,1,\frac16(3+\sqrt{33})$, and let $A\subset\R^2$ be a finite set. There are finite sets $B_{154}$ and $B_{46}$ such that 
    \begin{enumerate}[(a)]
        \item If $A\xrightarrow{2} \{T\} \cup \CC_3 \cup \CC_4$, then $A \combine{3} B_{154}\xrightarrow{2} \CC_3 \cup \CC_4$.
        \item If any 2-coloring of $A$ contains either two points at distance $\frac{4}{\sqrt3}$ that receive opposite colors or a monochromatic element of $\CC_4$, then $A\combine{2} B_{46} \xrightarrow{2} \{T\} \cup \CC_3 \cup \CC_4$.
    \end{enumerate}
\end{Lemma}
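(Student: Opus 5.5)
We will prove both parts by a reduction to a finite local check. In each part the existence of $B_{154}$ and $B_{46}$ is established by exhibiting these explicit point sets, with coordinates in a suitable number field such as $\mathbb{Q}(\sqrt3,\sqrt{11})$ or an extension of it, and then verifying a finite combinatorial property by computer. The key observation is that the combination $A\combine{m} B$ lets us attach a copy of $B$ to every occurrence of the first $m$ points of $B$ in $A$. It therefore suffices to prove a ``gadget'' statement about $B$ alone:

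\begin{enumerate}[(a)]
\item For (a), let $B_{154}$ begin with the three vertices of a copy of $T$ (or of an equilateral unit triangle). We show that every 2-coloring of $B_{154}$ in which these first three points are monochromatic contains a monochromatic element of $\CC_3\cup\CC_4$.
\item For (b), let $B_{46}$ begin with two points $p,q$ at distance $\frac{4}{\sqrt3}$. We show that every 2-coloring of $B_{46}$ in which $p$ and $q$ receive opposite colors contains a monochromatic element of $\{T\}\cup\CC_3\cup\CC_4$.
\end{enumerate}

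Given a gadget, the lemma follows by a short argument. For (a), fix a 2-coloring of $A\combine{3}B_{154}$. By hypothesis, its restriction to $A$ contains a monochromatic copy of $T$ or an element of $\CC_3\cup\CC_4$. In the last two cases we are done. In the first case, $A\combine{3} B_{154}$ contains a copy of $B_{154}$ attached along that triangle, so the gadget applies. If the monochromatic triple is an equilateral triangle, no gadget is needed, since $\CC_3$ is in the target family. Part (b) is identical, except that the two attaching points are the opposite-colored pair at distance $\frac4{\sqrt3}$. A $\CC_4$ found in $A$ already suffices, because the target family contains $\CC_4$.

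For the gadgets, the plan is to build $B$ greedily from the seed points. We repeatedly add points that complete unit rhombi, equilateral unit triangles, and copies of $T$ with the existing points. The distance $\frac16(3+\sqrt{33})$ is presumably chosen because it makes many such completions coincide, so the set stays small but rich in constraints. We then record all relevant configurations in the resulting point set. These are the induced unit 4-cycles, checked exactly, including that the diagonals are non-unit, together with the unit triangles and, in (b), the copies of $T$. We encode ``no monochromatic configuration'' as a CNF formula, with one clause for each configuration in each color, plus unit clauses fixing the seed colors. Unsatisfiability is then certified with a SAT solver. Symmetry lets us assume the seed's color is red in (a), and that $p$ is red and $q$ is blue in (b).

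The main obstacle is finding point sets small enough to describe, of sizes $154$ and $46$, that actually force a contradiction. This requires a search over candidate closure operations. It also requires exact arithmetic, so that distance equalities and the non-unit-diagonal conditions are certified rigorously rather than numerically. I would first generate points by rotating the seed around its vertices by the angles occurring in $T$ and in the equilateral triangle, take the closure for a few rounds, and then prune the result to a minimal unsatisfiable core.
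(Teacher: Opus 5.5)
Your proposal is essentially the paper's proof. The paper also reduces through $\oplus_m$ to a statement about the gadget $B$ alone, then uses exact arithmetic in $\mathbb{Q}(\sqrt3,\sqrt{11})$ and a SAT solver to certify that no valid coloring exists once the seed colors are fixed, and your handling of the equilateral case, of a $\CC_4$ already inside $A$, and of the color symmetry is correct.

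The one caveat is that you describe a search rather than supply its output. The whole content of the lemma is that such gadgets exist for these particular $T$ and $\frac{4}{\sqrt3}$, and nothing in your plan guarantees the search succeeds, so the argument is complete only with explicit sets. The paper supplies them: every point has the form $\frac{1}{12}(\sqrt3a+\sqrt{11}b,\,c+\sqrt{33}d)$ with $a,b,c,d\in\mathbb{Z}$, so each distance test is integer arithmetic. It found them by pruning the 258-point seed $(A_{33}\oplus_2 C_3)\oplus_2 C_3$, built from a dense unit-distance graph, rather than by a rotation closure.
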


\begin{proof}
    We present a set of 154 points $B_{154}$, and a subset of 46 points $B_{46}\subset B_{154}$, which are depicted in Figure~\ref{fig:rule_constructions}. We prove that if the first three points of $B_{154}$ which form a copy of $T$ (shown in red in Figure~\ref{fig:154_construction}) are the same color, then the remaining points in $B_{154}$ cannot be 2-colored without creating a monochromatic element of $\CC_3\cup\CC_4$. We then show that if the first two points of $B_{46}$ which are at distance $\frac{4}{\sqrt3}$ have opposite colors (as shown in Figure~\ref{fig:46_construction}), the remaining points of $B_{46}$ cannot be 2-colored without creating a monochromatic congruent copy of $T$ or a monochromatic element of $\CC_3\cup\CC_4$.

    This is sufficient to prove the lemma, since $A \combine{3} B_{154}$ for example contains $A$. When 2-colored, if there is a monochromatic element of $\CC_3\cup\CC_4$, we are already done. Otherwise, there is a monochromatic copy of $T$ in $A$ and a congruent copy of $B_{154}$ in $A \combine{3} B_{154}$ with that copy of $T$ forming its first three points. Now this copy of $B_{154}$ contains the desired monochromatic element of $\CC_3\cup\CC_4$.

\begin{figure}[ht]
\centering
\begin{subfigure}{.5\textwidth}
  \centering
  \includegraphics[scale=0.45]{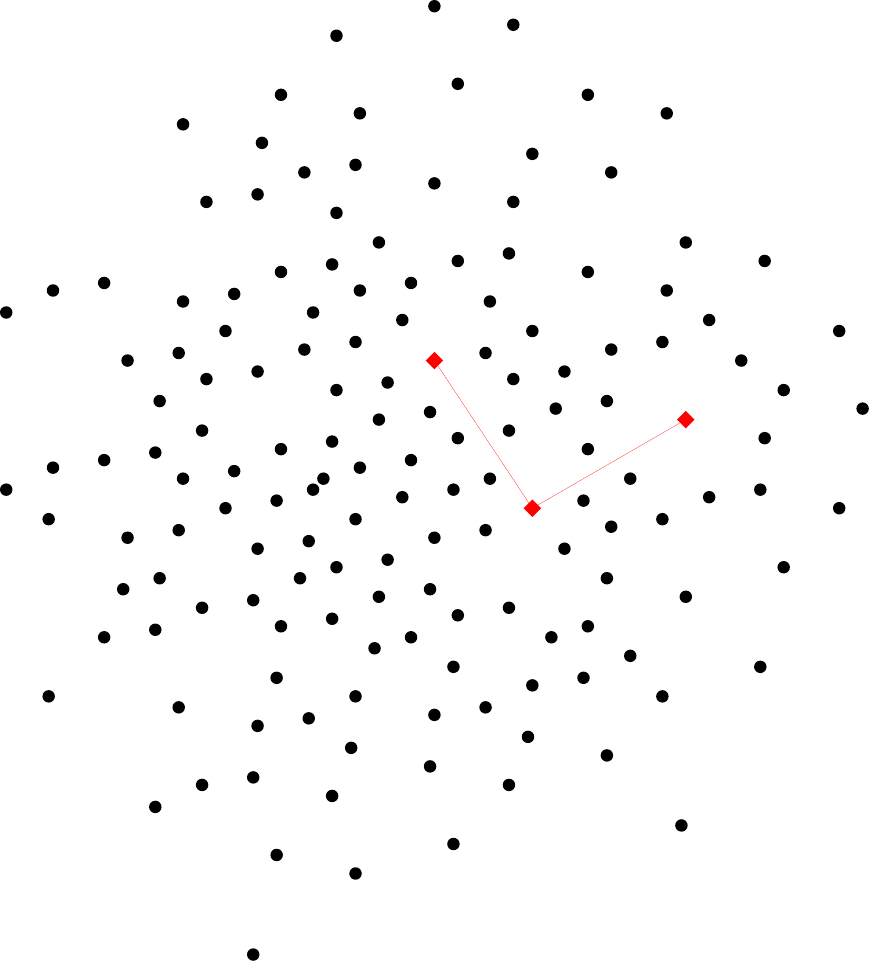}
    \caption{$B_{154}$ with three red points forming a $T$}
    \label{fig:154_construction}
\end{subfigure}%
\begin{subfigure}{.5\textwidth}
  \centering
   \raisebox{45pt}{\includegraphics[scale=0.45]{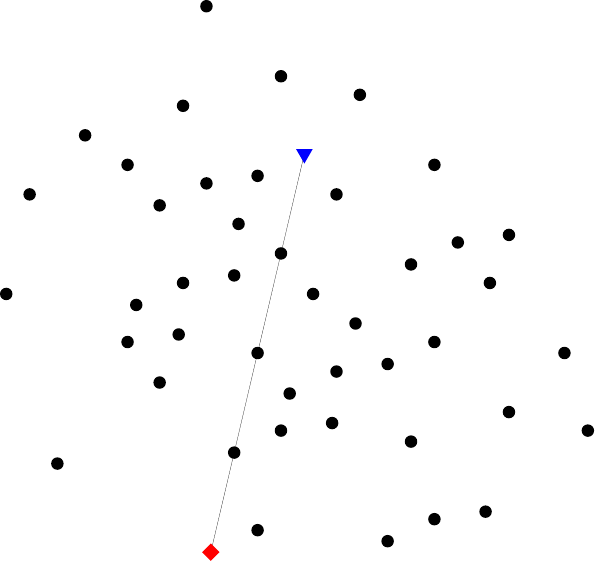}}
    \caption{$B_{46}$ with a red and a blue point at distance $\frac{4}{\sqrt{3}}$}
    \label{fig:46_construction}
\end{subfigure}
\caption{The constructions $B_{154}$ and $B_{46}$}
\label{fig:rule_constructions}
\end{figure}

The proof of this is computer assisted. We created a resources webpage \cite{resources}, where we keep text files containing coordinates of $B_{154}$ and $B_{46}$ (which can also be found in Appendix~\ref{apx:coordinates}), and a simple Python program. 
    
The Python program first converts a specified point set into a hypergraph, where the vertices corresponding to a set of points are connected by an edge if they geometrically form one of the constraints. Next, the program allows the user to assign the color $r$ to the first three vertices when testing $B_{154}$, or $r$ and $b$ to the first two vertices when testing $B_{46}$. It then attempts to find a valid $\{r,b\}$-coloring of the remaining vertices in the hypergraph. The program uses the SAT solver PySAT~\cite{SAT} in order to search for these colorings. In the cases of $B_{154}$ and $B_{46}$, it will report that no valid coloring exists, proving the lemma. 
    
The program can be used to search for colorings of any given set. In particular, one can delete any point from the file containing $B_{154}$ or $B_{46}$, and notice that the program finds a valid coloring.
    
A key fact that allows for this computational proof is that all points have the form
\begin{equation*}
\label{eq:moser_lattice}
\frac{1}{12}\left(\sqrt{3}a+\sqrt{11}b,c+\sqrt{33}d\right),
\end{equation*}

where $a,b,c,d$ are integers. Thus, if two points $p,q$ are represented by integers $[a_1,b_1,c_1,d_1]$ and $[a_2,b_2,c_2,d_2]$ such that
	\begin{align*}
	    3(a_1-a_2)^2 + 11(b_1-b_2)^2 + (c_1-c_2)^2 + 33 (d_1-d_2)^2  & \\
        +2\sqrt{33}\left((a_1-a_2)(b_1-b_2)+(c_1-c_2)(d_1-d_2)\right)&=144\delta^2
	\end{align*}
then $\|p-q\|=\delta$ exactly. This way, when forming the hypergraph, we can rigorously check whether a set of points forms one of our constraints with only integer arithmetic.
\end{proof}

\begin{proof}[Proof of Theorem~\ref{thm:main_c4_result}]
The steps can be combined backwards to create a single large set which cannot be 2-colored without creating a monochromatic element of $\CC_4$.

By Lemma~\ref{lem:computational}(b), $B_7\oplus_2 B_{46}\xrightarrow{2}\{T\}\cup\CC_3\cup\CC_4$. Then the set $A'=(B_7\oplus_2 B_{46}) \oplus_3 B_{154} $ satisfies $A'\xrightarrow{2}\CC_3\cup\CC_4$ by Lemma~\ref{lem:computational}(a). Next use Lemmas~\ref{lem:product} and~\ref{lem:equilateral_triangle}, and define the set $A$ satisfying $A \xrightarrow[]{2} \CC_4$ as an induced unit-copy of the graph $U(A')^{\square |U(A')|}$. Note $A$ has an astronomical number of points.
\end{proof}

\section{Conclusions}

The set described in the proof of Theorem~\ref{thm:main_c4_result} is not likely to be optimal. However we did put resources into lowering the point counts in Lemma~\ref{lem:computational} in order to speed up the computational parts of the proof. To help facilitate future research into similar problems, we give a brief outline of the approach that led to our result.

Our method for finding $B_{154}$ and $B_{46}$ was to use fast heuristic algorithms for coloring hypergraphs, and search over point sets that could be used to prove new constraints. The `seed' point sets that worked best were created from sets that start with many unit distances. $B_{154}$ and $B_{46}$ are subsets of the set shown in Figure~\ref{fig:seed_construction}. To create this set, we begin with $A_{33}$ shown in red, which is a unit-copy of one of the densest known unit-distance graphs on $34$ points (from \cite{EngHamSuVarZsa}) with one point subtracted. The seed set utilized is $A_{258}\coloneqq (A_{33}\oplus_2 C_3)\oplus_2 C_3$, on 258 points.

\begin{figure}[ht]
    \centering
    \includegraphics[scale = 1.5]{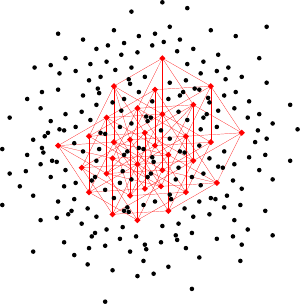}
    \caption{Construction of a good seed with 258 points}
    \label{fig:seed_construction}
\end{figure}

There are in fact many other 3-point sets in $A_{258}$ which cannot be monochromatic in any valid coloring, besides the triangle $T$ utilized in Lemma~\ref{lem:computational}(a). For each of these, we used another heuristic algorithm to quickly check how many points could be discarded from $A_{258}$ while maintaining this coloring property, and the lowest number obtained was 154 with the triangle $T$. Then we applied a similar procedure to pairs and obtained the number 46 with the segment of length $\frac{4}{\sqrt{3}}$.



It was shown in~\cite[Proposition~1.7]{AxeLiuSag} that $\R^2 \not\xrightarrow[]{4} \CC_4$. The case of 3 colors remains open.

\begin{Problem}
    Does $\R^2 \xrightarrow[]{3} \CC_4$?
\end{Problem}

There are several questions posed in \cite{AxeLiuSag}. We repeat two of those on longer cycles here as they are the most related and perhaps most susceptible to the methods used in this paper. The following is Question 2 and part of Question 3 in \cite{AxeLiuSag}.

\begin{Problem} Is it true that $\R^2\xrightarrow{3} \CC_6$ and $\R^2\xrightarrow{3} \CC_{10}$? Does $\R^2\xrightarrow{2} \CC_{5}$?
\end{Problem}

The approach used here is also similar to the one used in \cite{CurMooYip} to show that $\R^2\xrightarrow{2} \ell_3$, where $\ell_m\coloneqq \{(0,0),(0,1),...\, ,(0,m-1)\}$. But both in that paper and in this one, few steps are taken and each step is uncomplicated relative to the speed at which these colorings can be checked. Perhaps with a longer sequence of constraints, one could prove the following.

\begin{Problem}
Determine whether $\R^2\xrightarrow{2} \ell_4$.
\end{Problem}

\noindent
{\bf Acknowledgments.} The second author thanks Maria Axenovich, N\'ora Frankl, and Dingyuan Liu for useful discussions at the beginning of this project.

\bibliographystyle{abbrv}
\bibliography{bibliography} 
\appendix

\section{Point coordinates}
\label{apx:coordinates}

\noindent
The tuples representing the points of $B_{154}$:
\begin{align*}
    & [-1, 3, -9, 1], [5, 3, -3, 1], [-1, 1, 1, 1], [-11, 1, 1, -1], [0, 4, -8, 0], [-6, 2, -4, 0], [-5, 3, -3, -1], \\
    & [-7, 3, -3, 1], [0, 2, -10, 0], [-1, 3, 3, 1], [-7, 1, -5, 1], [-6, 2, -16, 0], [-11, 3, 3, -1], [-6, 0, 6, 0], \\
    & [5, 1, -5, 1], [-6, 2, 8, 0], [-6, 0, -6, 0], [0, 2, 2, 0], [-7, 3, -15, 1], [-5, 1, 7, -1], [-1, 1, 13, 1], \\
    & [-1, 1, -11, 1], [-5, 5, -13, -1], [5, 5, -13, 1], [0, 4, 4, 0], [0, 4, -20, 0], [-2, 4, -8, 2], [1, 1, -11, -1], \\
    & [6, 4, -2, 0], [6, 4, -14, 0], [-5, 3, 9, -1], [-4, 0, -6, -2], [-10, 2, 2, -2], [-5, 3, -15, -1], \\
    & [-10, 4, -8, -2], [1, 3, -9, -1], [-7, 3, 9, 1], [-2, 2, 2, 2], [-8, 0, -6, 2], [-12, 4, -8, 0], [-13, 3, -9, 1], \\
    & [0, 2, -22, 0], [5, 3, -15, 1], [6, 2, -4, 0], [6, 2, -16, 0], [-1, -1, -13, 1], [5, 3, 9, 1], [-1, 5, -7, 1], \\
    & [4, 2, 8, 2], [-1, 3, 15, 1], [4, 4, -2, 2], [-2, 0, 0, 2], [1, 3, 3, -1], [4, 2, -4, 2], [-1, 3, -21, 1], \\
    & [-1, 5, -19, 1], [4, 4, -14, 2], [-6, 4, -14, 0], [-2, 0, -12, 2], [-13, 1, 1, 1], [-8, 4, -2, 2], \\
    & [-2, 2, -10, 2], [-7, -1, 5, 1], [-7, 1, -17, 1], [-6, -2, -8, 0], [-13, 1, -11, 1], [-12, 0, 12, 0], \\
    & [-6, -2, 4, 0], [-7, 1, 19, 1], [-2, 0, 12, 2], [-7, -1, 17, 1], [-12, 2, -22, 0], [-11, 1, -11, -1], \\
    & [-6, 2, -28, 0], [-6, 4, -26, 0], [-11, 3, -21, -1], [-11, 5, -7, -1], [-11, 1, 13, -1], [-13, 3, 3, 1], \\
    & [-10, 0, 0, -2], [-17, 3, -3, -1], [-6, 4, -2, 0], [-11, 3, -9, -1], [-1, -1, 11, 1], [-6, 0, 18, 0], \\
    & [-4, 0, 6, -2], [-8, 0, 6, 2], [11, 1, 1, 1], [5, 1, -17, 1], [10, 2, -10, 2], [5, -1, 5, 1], [10, 0, 0, 2], \\
    & [-18, 2, -4, 0], [-18, 2, -16, 0], [-10, 2, -10, -2], [-13, 5, -7, 1], [-12, 2, 2, 0], [-12, 0, 0, 0], \\
    & [10, 2, 2, 2], [10, 4, -8, 2], [11, 3, -9, 1], [11, 3, 3, 1], [-6, 6, -12, 0], [0, 2, 14, 0], [-8, 2, 8, 2], \\
    & [-6, 2, 20, 0], [4, 0, 6, 2], [0, -2, -2, 0], [-12, 0, -12, 0], [-1, -1, -1, 1], [0, 0, -12, 0], [-6, 0, -18, 0], \\
    & [6, 2, 8, 0], [1, -1, -1, -1], [-8, 4, -14, 2], [-5, 1, -17, -1], [-12, -2, 10, 0], [-14, 0, 0, 2], \\
    & [-12, 2, -10, 0], [-5, -1, 17, -1], [1, 1, 1, -1], [-10, 0, 12, -2], [-5, 1, 19, -1], [1, 1, 13, -1], \\
    & [6, 0, 6, 0], [5, 1, 19, 1], [10, 0, 12, 2], [4, 0, 18, 2], [-1, 1, 25, 1], [-7, 1, 7, 1], [-1, -1, 23, 1], \\
    & [5, 1, 7, 1], [4, 2, -16, 2], [-1, 1, -23, 1], [4, 0, -6, 2], [-5, -1, 5, -1], [0, 0, 0, 0], [-10, 0, -12, -2], \\
    & [5, 5, -1, 1], [-2, 2, 14, 2], [-8, 2, -4, 2], [-14, 0, -12, 2], [1, 5, -19, -1], [-2, 2, -22, 2], [5, -1, -7, 1], \\
    & [-2, -2, 10, 2], [-7, -1, -7, 1], [-12, -2, -2, 0], [-10, 2, -22, -2], [0, -2, 10, 0], [6, 0, -6, 0], \\
    & [0, 0, 12, 0], [-5, 1, -5, -1], [1, -1, 11, -1] \\
\end{align*}

\noindent
The tuples representing the points of $B_{46}$:
\begin{align*}
    & [-4, 0, -6, -2], [-8, 4, -2, 2], [0, 4, -8, 0], [-6, 2, -4, 0], [-5, 3, -3, -1], [-7, 3, -3, 1], [0, 2, -10, 0], \\
    & [-1, 3, 3, 1], [-1, 3, -9, 1], [-7, 1, -5, 1], [-6, 2, -16, 0], [-11, 3, 3, -1], [-6, 0, 6, 0], [5, 1, -5, 1], \\
    & [-6, 2, 8, 0], [-1, 1, 1, 1], [-6, 0, -6, 0], [0, 2, 2, 0], [-7, 3, -15, 1], [-5, 1, 7, -1], [-1, 1, -11, 1], \\
    & [0, 4, 4, 0], [-2, 4, -8, 2], [1, 1, -11, -1], [1, 3, -9, -1], [-7, 3, 9, 1], [-2, 2, 2, 2], [-13, 3, -9, 1], \\
    & [5, 3, -15, 1], [6, 2, -4, 0], [-1, 3, -21, 1], [-13, 1, 1, 1], [-7, -1, 5, 1], [-13, 3, 3, 1], [-10, 0, 0, -2], \\
    & [-6, 4, -2, 0], [-12, 0, 0, 0], [-8, 2, 8, 2], [-1, -1, -1, 1], [1, -1, -1, -1], [1, 1, 1, -1], [-7, 1, 7, 1], \\
    & [-5, -1, 5, -1], [0, 0, 0, 0], [-8, 2, -4, 2], [-5, 1, -5, -1] \\
\end{align*}

\end{document}